\documentclass[psamsfont,a4paper,11pt]{amsart}

\usepackage[english]{babel}   

\addtolength{\hoffset}{-2cm}         
\addtolength{\textwidth}{4cm}
\addtolength{\voffset}{-1.6cm}	
\addtolength{\textheight}{1.2cm}

\usepackage{setspace}     
\usepackage{enumerate} 	
\usepackage{afterpage} 	
\usepackage[usenames,dvipsnames]{color}
\usepackage{graphicx}	

\setlength{\tabcolsep}{18pt}

\usepackage{amsmath} 	
\usepackage{amssymb}	
\usepackage{mathrsfs} 	
\usepackage{amsfonts} 	
\usepackage{latexsym} 	
\usepackage[latin1]{inputenc} 

\usepackage{amsthm} 	
\usepackage{stackrel}       
\usepackage{amscd} 	

\usepackage{tabularx}	
\usepackage{longtable}	

\usepackage{verbatim} 
\usepackage{blindtext} 

\usepackage{amssymb,latexsym,amsmath}    
\usepackage{pstricks,multido,pst-plot}		 
\usepackage{multicol,fancyheadings}			   
\usepackage{float}                       
\usepackage[vcentermath,enableskew]{youngtab}

\usepackage{pgfplots}
\usepackage{subfig}

\language 2
\allowdisplaybreaks

\hyphenation{par-ti-cu-lar}
\hyphenation{homeo-mor-phic}

\def\d{\delta} 
 
\def\g{\gamma} 
 
\def\l{\lambda}

\def\t{\tau}

\newcommand{\C}{\mathbb{C}}

\newcommand{\NN}{\mathbb{N}} 
\newcommand{\Z}{\mathbb{Z}}



\newtheorem{Thm}{Theorem}[]		
\newtheorem{Lemma}[Thm]{Lemma}

\newtheorem{Cor}[Thm]{Corollary}

\newtheorem{Qn}{Question}
\newtheorem*{thm}{Theorem}	

\newtheorem*{qn}{Question}

\theoremstyle{definition}

\theoremstyle{remark}
 
\newtheorem*{rmk}{Remark}

\newtheorem{ind}[]{{\rm\it Indice}}

\usepackage{multicol}

\usepackage{tikz}
\usetikzlibrary{arrows}

\def\ferrers(#1,#2){%
    \if\space #2
        \put(0,1){\line(1,0){#1}}
        \multiput(0,0)(1,0){#1}{\line(0,1){1}} \put(#1,0){\line(0,1){1}}
        \put(0,0){\line(1,0){#1}}
    \else
        \put(0,1){\line(1,0){#1}}
        \multiput(0,0)(1,0){#1}{\line(0,1){1}} \put(#1,0){\line(0,1){1}}
        \put(0,0){\line(1,0){#1}}
        \put(0,1){\ferrers(#2)}
    \fi
}

\usepackage{paralist}
\usepackage{cite}

\title{Conjugacy growth series for finitary wreath products}

\author[Locus]{Madeline Locus}
\address{Department of Mathematics and Computer Science,
Emory University, Atlanta, GA 30322}
\email{madeline.locus@emory.edu}

\begin{document}

\subjclass[2010]{05A17, 11P81, 20B30, 20B35, 20C32}
\keywords{partitions, group theory, representation theory}

\begin{abstract}
We examine the conjugacy growth series of all wreath products of the finitary permutation groups $\text{Sym}(X)$ and $\text{Alt}(X)$ for an infinite set $X$.  We determine their asymptotics, and we characterize the limiting behavior between the $\text{Alt}(X)$ and $\text{Sym}(X)$ wreath products.  In particular, their ratios form a limit if and only if the dimension of the symmetric wreath product is twice the dimension of the alternating wreath product.
\end{abstract}

\maketitle

\section{Introduction and Statement of Results}

We begin by defining the infinite finitary symmetric and alternating groups and their corresponding wreath products, and then we state our results regarding growth series identities.

For an infinite set $X$, the \emph{finitary symmetric group} $\text{Sym}(X)$ is the group of permutations of $X$ with finite support.  We define the \emph{permutational wreath product} of a group $H$ with $\text{Sym}(X)$ as the group $H\wr_X\text{Sym}(X):=H^{(X)}\rtimes\text{Sym}(X)$ with the following properties:

\begin{enumerate}[(i)]
\item The group $H^{(X)}$ is the group of functions from $X$ to $H$ with finite support.
\item The action of permutations $f\in\text{Sym}(X)$ on functions $\psi\in H^{(X)}$ is defined by
\begin{equation*}
\psi\mapsto f(\psi):=\psi\circ f^{-1}.
\end{equation*}

\item Multiplication in the semi-direct product is defined for $\varphi,\psi\in H^{(X)}$ and $f,g\in\text{Sym}(X)$ by
\begin{equation*}
(\varphi,f)(\psi,g)=(\varphi f(\psi),fg).
\end{equation*}

\end{enumerate}

The \emph{finitary alternating group} $\text{Alt}(X)$ is the subgroup of $\text{Sym}(X)$ of permutations with even signature, and the permutational wreath product $H\wr_X\text{Alt}(X)$ is defined as above.  We now define some general terminology.  For any group $G$ generated by a set $S$, the \emph{word length} $\ell_{G,S}(g)$ of any element $g\in G$ is the smallest nonnegative integer $n$ such that there exist $s_1,\dots,s_n\in S\cup S^{-1}$ with $g=s_1\cdots s_n$.  The \emph{conjugacy length} $\kappa_{G,S}(g)$ is the smallest word length appearing in the conjugacy class of $g$.  If $n$ is any natural number, we denote by $\g_{G,S}(n)\in\NN\cup\{0\}\cup\{\infty\}$ the number of conjugacy classes in $G$ with smallest word length $n$.  If $\g_{G,S}(n)$ is finite for all $n$, then we may define the conjugacy growth series of a group $G$ with generating set $S$ to be the following $q$-series:
\begin{equation*}
C_{G,S}(q):=\sum\limits_{[g]\in\text{Conj}(G)}q^{\kappa_{G,S}(g)}=\sum\limits_{n=0}^\infty\g_{G,S}(n)q^n,
\end{equation*}
where the first sum is over representatives of conjugacy classes of $G$.  Bacher and de la Harpe \cite{BdlH} prove conjugacy growth series identities for sufficiently large\footnote{The condition that the generating sets are sufficiently large refers to the properties defined in Section \ref{2}.} generating sets $S$ of $\text{Sym}(X)$, $S'$ of $\text{Alt}(X)$, and $S^{\left(W_S\right)}$ of $W_S=H_S\wr_X\text{Sym}(X)$ relating the finitary permutation groups and their wreath products to the partition function.  Explicitly, we have the fascinating identities
\begin{equation*}\label{cgs}
C_{\text{Sym}(X),S}(q)=\sum\limits_{n=0}^\infty p(n)q^n=\prod_{n=1}^\infty\frac{1}{1-q^n}\tag{1.1}
\end{equation*}
for the finitary symmetric group,
\begin{equation*}\label{cgsalt}
C_{\text{Alt}(X),S'}(q)=\left(\sum\limits_{n=0}^\infty p(n)q^n\right)\left(\sum\limits_{m=0}^\infty p_e(m)q^m\right)=\frac{1}{2}\prod\limits_{n=1}^\infty\frac{1}{(1-q^n)^2}+\frac{1}{2}\prod\limits_{n=1}^\infty\frac{1}{1-q^{2n}}\tag{1.2}
\end{equation*}
for the finitary alternating group\footnote{Recall that $p_e(m)$ denotes the number of partitions of $m$ into an even number of parts.}, and
\begin{equation*}\label{cgswr}
C_{W_S,S^{\left(W_S\right)}}(q)=\sum\limits_{n=0}^\infty \g_{W_S,S^{\left(W_S\right)}}(n)q^n=\prod\limits_{n=1}^\infty\frac{1}{\left(1-q^n\right)^{M_S}}\tag{1.3}
\end{equation*}
for wreath products $W_S=H_S\wr_X\text{Sym}(X)$, where $M_S$ is the number of conjugacy classes of $H_S$.  Following Bacher's and de la Harpe's proofs of these identities, we prove\footnote{See also Ian Wagner's work on properties of $\text{Alt}(X).$} the corresponding growth series identity for a sufficiently large generating set $S^{\left(W_A\right)}$ of $W_A=H_A\wr_X\text{Alt}(X)$, namely
\begin{equation*}\label{cgswralt}
C_{W_A,S^{\left(W_A\right)}}(q)=\sum\limits_{n=0}^\infty\g_{W_A,S^{\left(W_A\right)}}(n)q^n=\left(\frac{1}{2}\prod_{n=1}^\infty\frac{1}{(1-q^n)^2}+\frac{1}{2}\prod_{n=1}^\infty\frac{1}{1-q^{2n}}\right)^{M_A}\tag{1.4}
\end{equation*}
where $M_A$ is the number of conjugacy classes of $H_A$.  We provide the proof of equation (\ref{cgswralt}) in Section \ref{2}.  From now on, we denote $\g_W(n):=\g_{W,S^{(W)}}(n)$ for convenience.\\

\begin{rmk}
Recall Dedekind's eta function $\eta(\t)=q^{1/24}\prod_{n\geq1}(1-q^n)$ for $\t\in\mathcal{H}$, where $\mathcal{H}$ denotes the upper half complex plane and $q:=e^{2\pi i\t}$.  Equation (\ref{cgsalt}) can be written as the linear combination of eta-quotients
\begin{equation*}
C_{\text{Alt}(X),S'}(q)=\frac{1}{2}\cdot\frac{q^{1/12}}{\eta(\t)^2}+\frac{1}{2}\cdot\frac{q^{1/12}}{\eta(2\t)},
\end{equation*}
which is essentially the sum of a modular form of weight $-1$ and a modular form of weight $-\frac{1}{2}$, up to multiplication by $q^{1/12}$.  Studying such linear combinations may shed light on properties of sums of mixed weight modular forms.
\end{rmk}

It is natural to consider the number $\g_{W_S}(n)$ as a function of the number of conjugacy classes $M_S$ in order to study properties of the coefficients of the above $q$-series.  Here we obtain a universal recurrence for these numbers.  This result requires the ordinary divisor function $\sigma_k(n)=\sum_{d\mid n}d^k$.

\begin{Thm}\label{thm}
For $n\geq2$, define the polynomial
\begin{equation*}
\widehat{F}_n(x_1,\dots,x_{n-1}):=\sum_{\substack{m_1,\dots,m_{n-1}\geq0 \\
m_1+\cdots+(n-1)m_{n-1}=n} }(-1)^{m_1+\cdots+m_{n-1}}\cdot\frac{(m_1+\cdots+m_{n-1}-1)!}{m_1!\cdots m_{n-1}!}\cdot x_1^{m_1}\cdots x_{n-1}^{m_{n-1}}.
\end{equation*}
Let $H_S$ be a finite group with $M_S$ conjugacy classes, $X$ an infinite set, and $W_S=H_S\wr_X\emph{Sym}(X)$ a wreath product generated by a sufficiently large set $S^{\left(W_S\right)}$.  Then we have
\begin{equation*}
C_{W_S,S^{\left(W_S\right)}}(q)=\sum\limits_{n=0}^\infty\g_{W_S}(n)q^n=\prod\limits_{n=1}^\infty\left(1-q^n\right)^{-M_S},
\end{equation*}
where $\g_{W_S}(n)$ satisfies the recurrence relation
\begin{equation*}
\g_{W_S}(n)=\widehat{F}_n\big(\g_{W_S}(1),\dots,\g_{W_S}(n-1)\big)+\frac{M_S}{n}\cdot\sigma_1(n).
\end{equation*}

\end{Thm}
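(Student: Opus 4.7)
The plan is to derive the recurrence by taking the logarithm of the product formula (\ref{cgswr}) and matching coefficients of $q^n$ on both sides.

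First, on the product side, expanding $-\log(1-q^n)=\sum_{k\geq 1}q^{nk}/k$ and regrouping by the total exponent $m=nk$ gives
$$\log\prod_{n\geq 1}(1-q^n)^{-M_S}=M_S\sum_{n\geq 1}\sum_{k\geq 1}\frac{q^{nk}}{k}=M_S\sum_{m\geq 1}\left(\sum_{k\mid m}\frac{1}{k}\right)q^m.$$
The inner sum equals $\sigma_1(m)/m$ via the substitution $d=m/k$, so the coefficient of $q^n$ in $\log C_{W_S,S^{(W_S)}}(q)$ is $M_S\sigma_1(n)/n$. This produces the second summand in the claimed recurrence.

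Next, on the series side, write $C_{W_S,S^{(W_S)}}(q)=1+f(q)$ with $f(q)=\sum_{k\geq 1}\g_{W_S}(k)q^k$ and apply the Mercator expansion $\log(1+f)=\sum_{j\geq 1}(-1)^{j-1}f^j/j$. A multinomial expansion of $f^j$, grouping monomials $\g_{W_S}(k_1)\cdots\g_{W_S}(k_j)$ by the multiset of indices (letting $m_i$ count the occurrences of $i$), yields
$$[q^n]f^j=\sum_{\substack{m_1,\dots,m_n\geq 0\\ \sum i m_i=n,\ \sum m_i=j}}\frac{j!}{m_1!\cdots m_n!}\,\g_{W_S}(1)^{m_1}\cdots\g_{W_S}(n)^{m_n}.$$
Summing over $j$ and setting $J=m_1+\cdots+m_n$ collapses the factor $(-1)^{j-1}j!/j$ into $(-1)^{J-1}(J-1)!$. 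The unique partition with $m_n=1$ forces $J=1$ and contributes exactly $\g_{W_S}(n)$; the remaining partitions have $m_n=0$ and are indexed by the tuples appearing in the definition of $\widehat{F}_n$. Rewriting $(-1)^{J-1}=-(-1)^J$ identifies this remainder with $-\widehat{F}_n(\g_{W_S}(1),\dots,\g_{W_S}(n-1))$.

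Equating the two expressions for $[q^n]\log C_{W_S,S^{(W_S)}}(q)$ gives
$$\g_{W_S}(n)-\widehat{F}_n\bigl(\g_{W_S}(1),\dots,\g_{W_S}(n-1)\bigr)=\frac{M_S}{n}\,\sigma_1(n),$$
which is the desired recurrence after rearrangement. No deep obstacle arises; the main bookkeeping point is tracking the sign when isolating the $m_n=1$ contribution, so that $\widehat{F}_n$ appears with precisely the sign convention $(-1)^{m_1+\cdots+m_{n-1}}$ stated in the theorem.
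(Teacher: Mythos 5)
Your argument is correct, and it reaches the recurrence by a genuinely more direct route than the paper. The paper also starts from $\log\prod_{n\ge 1}(1-q^n)^{-M_S}$, but it then differentiates to get a logarithmic-derivative (Newton-type) convolution identity $0=b(n)+b(n-1)\g_{W_S}(1)+\cdots+b(1)\g_{W_S}(n-1)+n\g_{W_S}(n)$ with $b(n)=-M_S\sigma_1(n)$, and converts this into the stated recurrence by interpreting the $\g_{W_S}(i)$ as elementary symmetric functions and the $b(n)$ as signed power sums of the roots of an auxiliary polynomial, invoking the classical (Waring-type) formula expressing $s_n$ in terms of $\sigma_1,\dots,\sigma_n$ --- that formula is exactly where $\widehat{F}_n$ enters. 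You bypass both the differentiation and the symmetric-function machinery: expanding $\log(1+f)$ by the Mercator series and the multinomial theorem gives $[q^n]\log C_{W_S,S^{(W_S)}}(q)=\g_{W_S}(n)-\widehat{F}_n\bigl(\g_{W_S}(1),\dots,\g_{W_S}(n-1)\bigr)$ directly (your isolation of the $m_n=1$ term and the sign bookkeeping $(-1)^{J-1}=-(-1)^J$ are both right, as is $\sum_{k\mid m}1/k=\sigma_1(m)/m$ on the product side), so equating coefficients finishes the proof. In effect you re-derive the Waring formula in the special case needed, which makes your proof more self-contained and elementary; the paper's route buys an explicit link to Newton's identities and matches the method of the earlier works it cites, but requires the extra step of setting up the auxiliary polynomial and quoting the power-sum formula. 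Note that both you and the paper take the product identity $C_{W_S,S^{(W_S)}}(q)=\prod_{n\ge 1}(1-q^n)^{-M_S}$ as given from Bacher--de la Harpe, which is appropriate since that is equation (\ref{cgswr}).
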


\begin{rmk}
The polynomials $\widehat{F}_n$ are fairly straightforward to compute using only the partitions of $n$; the first few are listed below.
\begin{align*}
\widehat{F}_2(x_1)&=\frac{1}{2}x_1^2\\
\widehat{F}_3(x_1,x_2)&=-\frac{1}{3}x_1^3+x_1x_2\\
\widehat{F}_4(x_1,x_2,x_3)&=\frac{1}{4}x_1^4-x_1^2x_2+\frac{1}{2}x_2^2+x_1x_3\\
\widehat{F}_5(x_1,x_2,x_3,x_4)&=-\frac{1}{5}x_1^5+x_1^3x_2-x_1^2x_3-x_1x_2^2+x_1x_4+x_2x_3\\
\widehat{F}_6(x_1,x_2,x_3,x_4,x_5)&=\frac{1}{6}x_1^6-x_1^4x_2+x_1^3x_3+\frac{3}{2}x_1^2x_2^2-x_1^2x_4-2x_1x_2x_3+x_1x_5-\frac{1}{3}x_2^3+x_2x_4+\frac{1}{2}x_3^2.
\end{align*}
\end{rmk}
\begin{rmk}
These polynomials have been used in earlier work \cite{BKO,FL} on divisors of modular forms and the Rogers-Ramanujan identities.
\end{rmk}

In recent work, Nekrasov and Okounkov obtained a different formula for the infinite products in Theorem \ref{thm} in terms of hook lengths of partitions.  Let $\l\vdash L$ denote that $\l$ is a partition of the number $L$.  The hook length of a partition $\l=(\l_1,\dots,\l_n)\vdash L$ is defined using the Ferrers diagram of $\l$.  For example, Figure 1 below is a Ferrers diagram of the partition $\l=(6,4,3,1,1)\vdash15$, Figure 2 represents a hook length of 4, and Figure 3 shows all hook lengths associated to $\l$.
\begin{figure}[h]
	\begin{minipage}{.34\textwidth}
	\includegraphics[width=.55cm,height=.5cm]{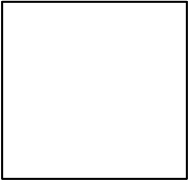}\,\includegraphics[width=.55cm,height=.5cm]{square1}\,\includegraphics[width=.55cm,height=.5cm]{square1}\,\includegraphics[width=.55cm,height=.5cm]{square1}\,\includegraphics[width=.55cm,height=.5cm]{square1}\,\includegraphics[width=.55cm,height=.5cm]{square1}\\
	\includegraphics[width=.55cm,height=.5cm]{square1}\,\includegraphics[width=.55cm,height=.5cm]{square1}\,\includegraphics[width=.55cm,height=.5cm]{square1}\,\includegraphics[width=.55cm,height=.5cm]{square1}\\
	\includegraphics[width=.55cm,height=.5cm]{square1}\,\includegraphics[width=.55cm,height=.5cm]{square1}\,\includegraphics[width=.55cm,height=.5cm]{square1}\\
	\includegraphics[width=.55cm,height=.5cm]{square1}\\
	\includegraphics[width=.55cm,height=.5cm]{square1}
	\caption{Partition}
	\label{partition}
	\end{minipage}%
	\begin{minipage}{.34\textwidth}
	\includegraphics[width=.55cm,height=.5cm]{square1}\,\includegraphics[width=.55cm,height=.5cm]{square1}\,\includegraphics[width=.55cm,height=.5cm]{square1}\,\includegraphics[width=.55cm,height=.5cm]{square1}\,\includegraphics[width=.55cm,height=.5cm]{square1}\,\includegraphics[width=.55cm,height=.5cm]{square1}\\
	\includegraphics[width=.55cm,height=.5cm]{square1}\,\includegraphics[width=.55cm,height=.5cm]{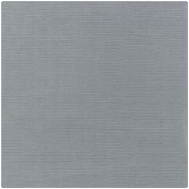}\,\includegraphics[width=.55cm,height=.5cm]{grey_square}\,\includegraphics[width=.55cm,height=.5cm]{grey_square}\\
	\includegraphics[width=.55cm,height=.5cm]{square1}\,\includegraphics[width=.55cm,height=.5cm]{grey_square}\,\includegraphics[width=.55cm,height=.5cm]{square1}\\
	\includegraphics[width=.55cm,height=.5cm]{square1}\\
	\includegraphics[width=.55cm,height=.5cm]{square1}
	\caption{Hook Length}
	\label{partition}
	\end{minipage}%
	\begin{minipage}{.34\textwidth}
	\fbox{10}\,\fbox{7 }\,\fbox{6 }\,\fbox{4 }\,\fbox{2 }\,\fbox{1 }\\
	\fbox{7 }\,\,\fbox{4 }\,\fbox{3 }\,\fbox{1 }\\
	\fbox{5 }\,\,\fbox{2 }\,\fbox{1 }\\
	\fbox{2 }\\
	\fbox{1 }
	\caption{Hook Lengths}
	\label{partition}
	\end{minipage}
\end{figure}

More generally, for each box $v$ in the Ferrers diagram of a partition $\l$, its \emph{hook length} $h_v(\l)$ is defined as the number of boxes $u$ such that
\begin{enumerate}[(i)]
\item $u=v$,
\item $u$ is in the same column as $v$ and below $v$, or
\item $u$ is in the same row as $v$ and to the right of $v$.
\end{enumerate}
The \emph{hook length multi-set} $\mathcal{H}(\l)$ is the set of all hook lengths of $\l$.
Theorem \ref{thm} implies the following formula for $\g_{W_S}(n)$ in terms of hook lengths. 

\begin{Cor}\label{NO}
We have that
\begin{eqnarray*}
\g_{W_S}(n)&=&\widehat{F}_n\big(\g_{W_S}(1),\dots,\g_{W_S}(n-1)\big)+\frac{M_S}{n}\cdot\sigma_1(n)\\
&=&\sum\limits_{\l\vdash n}\prod\limits_{h\in\mathcal{H}(\l)}\left(1+\frac{M_S-1}{h^2}\right).
\end{eqnarray*}
\end{Cor}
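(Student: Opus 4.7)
The plan is to invoke the Nekrasov--Okounkov hook length formula and match it against the product formula for $C_{W_S, S^{(W_S)}}(q)$ provided by Theorem \ref{thm}. The first equality in the corollary is simply the recurrence already established in Theorem \ref{thm}, so the only new content is the hook length expression on the second line.

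First I would recall the Nekrasov--Okounkov identity in its standard form,
\begin{equation*}
\prod_{n=1}^\infty (1-q^n)^{z-1} = \sum_{\l \in \mathcal{P}} q^{|\l|} \prod_{h \in \mathcal{H}(\l)} \left(1 - \frac{z}{h^2}\right),
\end{equation*}
valid as an identity of formal power series in $q$ with polynomial coefficients in $z$. This is the exact tool that converts an infinite product of the shape appearing in Theorem \ref{thm} into a sum over partitions weighted by hook length factors.

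Next I would specialize the parameter by setting $z = 1 - M_S$, so that the exponent $z-1$ on the left becomes $-M_S$. The left-hand side then reads $\prod_{n \geq 1}(1-q^n)^{-M_S}$, which by Theorem \ref{thm} equals $\sum_{n \geq 0} \g_{W_S}(n)\, q^n$. On the right-hand side the factor $1 - z/h^2$ becomes $1 + (M_S-1)/h^2$, yielding
\begin{equation*}
\sum_{n=0}^\infty \g_{W_S}(n)\, q^n = \sum_{\l \in \mathcal{P}} q^{|\l|} \prod_{h \in \mathcal{H}(\l)} \left(1 + \frac{M_S-1}{h^2}\right).
\end{equation*}
Extracting the coefficient of $q^n$ on both sides and grouping partitions by size gives exactly the claimed formula.

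The only step with any real weight is the appeal to Nekrasov--Okounkov; everything else is a direct specialization and coefficient extraction. Since that identity is available in the literature as a formal power series identity in $q$ (so no convergence issues arise) and holds for all $z$, the specialization $z = 1 - M_S$ is legitimate for every positive integer $M_S$, and the proof is immediate once the identity is cited.
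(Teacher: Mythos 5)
Your proposal is correct and follows essentially the same route as the paper: the first equality is just the recurrence of Theorem \ref{thm}, and the second comes from the Nekrasov--Okounkov identity specialized at $z=1-M_S$ (the paper's substitution $z\mapsto 1+r$ with $r=-M_S$), followed by extracting the coefficient of $q^n$.
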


\begin{rmk}
Kostant observed \cite{K} that the coefficients of the Nekrasov-Okounkov hook length identity are polynomials in the variable $z=1-M_S$, but he did not give an explicit formula for computing them.
\end{rmk}

In analogy with the previous theorem, one may ask if the coefficients $\g_{W_A}(n)$ in the alternating case can be seen as a function of the number of conjugacy classes $M_A$.  We obtain a similar recurrence relation in this case.

\begin{Thm}\label{thm2}
Let $\widehat{F}_n(x_1,\dots,x_{n-1})$ be defined as above.  Let $H_A$ be a finite group with $M_A$ conjugacy classes, $X$ an infinite set, and $W_A=H_A\wr_X\emph{Alt}(X)$ a wreath product generated by a sufficiently large set $S^{\left(W_A\right)}$.  Then we have
\begin{equation*}
C_{W_A,S^{\left(W_A\right)}}(q)=\sum\limits_{n=0}^\infty\g_{W_A}(n)q^n=\left(\frac{1}{2}\prod_{n=1}^\infty\frac{1}{\left(1-q^n\right)^2}+\frac{1}{2}\prod_{n=1}^\infty\frac{1}{1-q^{2n}}\right)^{M_A},
\end{equation*}
where $\g_{W_A}(n)$ satisfies the recurrence relation
\begin{equation*}
\g_{W_A}(n)=\frac{1}{2^{M_A}}\sum_{k=0}^{M_A}\binom{M_A}{k}\Bigg(\widehat{F}_n\big(a_k(1),\dots,a_k(n-1)\big)-\sum_{\d\mid n}\d\cdot\left[(-1)^\d(k-M_A)-(k+M_A)\right]\Bigg),
\end{equation*}
and the $a_k$ are defined by their generating function
\begin{equation*}
\sum_{n=0}^\infty a_k(n)q^n:=\prod_{n=1}^\infty\frac{1}{\left(1-q^n\right)^{2k}\left(1-q^{2n}\right)^{M_A-k}}.
\end{equation*}
\end{Thm}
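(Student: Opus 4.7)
The first assertion---the closed form for $C_{W_A,S^{(W_A)}}(q)$---is equation (\ref{cgswralt}), proven in Section \ref{2}, so the substantive content is the recurrence. The plan is to establish it in three short steps.

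First I would apply the binomial theorem. Setting $A(q)=\tfrac{1}{2}\prod_{n\ge1}(1-q^n)^{-2}$ and $B(q)=\tfrac{1}{2}\prod_{n\ge1}(1-q^{2n})^{-1}$,
\[
C_{W_A,S^{(W_A)}}(q)=(A+B)^{M_A}=\frac{1}{2^{M_A}}\sum_{k=0}^{M_A}\binom{M_A}{k}\prod_{n\ge1}\frac{1}{(1-q^n)^{2k}(1-q^{2n})^{M_A-k}}=\frac{1}{2^{M_A}}\sum_{k=0}^{M_A}\binom{M_A}{k}\sum_{n\ge0}a_k(n)q^n.
\]
Matching coefficients of $q^n$ gives $\g_{W_A}(n)=\frac{1}{2^{M_A}}\sum_{k=0}^{M_A}\binom{M_A}{k}a_k(n)$, so it suffices to expand each $a_k(n)$ as $\widehat{F}_n$ applied to smaller indices plus the claimed divisor correction.

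Second I would invoke the universal identity that underlies Theorem \ref{thm}: for any formal power series $F(q)=1+\sum_{n\ge1}c_nq^n$ with $\log F(q)=\sum_{n\ge1}\ell_nq^n$, one has $c_n=\widehat{F}_n(c_1,\dots,c_{n-1})+\ell_n$. The proof is to expand $\log F(q)=\sum_{j\ge1}(-1)^{j+1}(F(q)-1)^j/j$ and gather the degree-$n$ contributions by the composition index $(m_1,m_2,\dots)$ with $\sum_j jm_j=n$; the lone index $m_n=1$, $m_j=0$ ($j<n$), produces $c_n$, while the remaining indices (i.e.\ partitions of $n$ with every part strictly less than $n$) assemble with the opposite sign into $-\widehat{F}_n(c_1,\dots,c_{n-1})$.

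Third I would apply this identity to $F_k(q):=\sum_n a_k(n)q^n$. Using $-\log(1-x)=\sum_{j\ge1}x^j/j$ together with $\sum_{j\mid N}1/j=\sigma_1(N)/N$,
\[
\ell_n^{(k)}=[q^n]\log F_k(q)=\frac{2k\,\sigma_1(n)}{n}+\frac{2(M_A-k)\,\sigma_1(n/2)}{n},
\]
with the second term understood as $0$ when $n$ is odd. A routine divisor-sum rearrangement---splitting $\sum_{d\mid n}d$ by the parity of $d$, using $(-1)^{\d}$ to encode that parity, and invoking $\sum_{d\mid n,\,d\,\text{even}}d=2\sigma_1(n/2)$ for even $n$---recasts $\ell_n^{(k)}$ in the $(-1)^{\d}(k-M_A)-(k+M_A)$ shape appearing in the theorem. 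Substituting the resulting recurrence $a_k(n)=\widehat{F}_n(a_k(1),\dots,a_k(n-1))+\ell_n^{(k)}$ into the formula from the first step completes the proof.

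The hardest point will be organizational rather than conceptual: the universal identity in the second step requires a careful isolation of the single-part partition $(n)$ from all partitions of $n$ with parts $\le n-1$, so that $\ell_n$ appears cleanly as an additive correction rather than being entangled with the $\widehat{F}_n$-sum. Beyond this, the argument is just binomial expansion, a logarithm series, and elementary divisor arithmetic.
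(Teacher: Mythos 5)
Your proposal is correct in substance and follows the same skeleton as the paper's proof: binomial expansion of the $M_A$-th power into the summands $\prod_{n\geq1}(1-q^n)^{-2k}(1-q^{2n})^{-(M_A-k)}$, a recurrence for each $a_k(n)$ extracted from the logarithm of the corresponding infinite product, and reassembly via $\g_{W_A}(n)=\frac{1}{2^{M_A}}\sum_{k}\binom{M_A}{k}a_k(n)$. The one genuine methodological difference is the middle step: the paper takes the logarithmic derivative, obtains the convolution $0=b_k(n)+b_k(n-1)a_k(1)+\cdots+b_k(1)a_k(n-1)+na_k(n)$, and then invokes Newton's identities between power sums and elementary symmetric functions (as in Theorem \ref{thm}), whereas you prove the universal identity $c_n=\widehat{F}_n(c_1,\dots,c_{n-1})+[q^n]\log F$ directly from the Mercator expansion of $\log F$; this is slightly more direct, avoids the detour through polynomial roots, and makes the additive correction $\ell_n$ transparent, but it establishes exactly the same key lemma. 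One point you should not gloss over when writing this up: your (correct) computation gives
\[
\ell_n^{(k)}=\frac{2k\,\sigma_1(n)}{n}+\frac{2(M_A-k)\,\sigma_1(n/2)}{n}=-\sum_{\delta\mid n}\frac{1}{\delta}\Big[(-1)^{\delta}(k-M_A)-(k+M_A)\Big],
\]
i.e.\ the divisor sum carries the weight $1/\delta$, not the weight $\delta$ that literally appears in the theorem; the paper's own proof first derives $-\frac{1}{n}\sum_{d\mid n}d\big[(-1)^{n/d}(k-M_A)-(k+M_A)\big]$ and then misreindexes (under $\delta=n/d$ the factor $d/n$ becomes $1/\delta$), so the displayed recurrence in the statement contains a typo. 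Your claim that the rearrangement ``recasts $\ell_n^{(k)}$ in the shape appearing in the theorem'' silently papers over this discrepancy; either record the corrected weight $1/\delta$ or keep the $\frac{1}{n}\sum_{d\mid n}d(\cdots)$ form. (A quick sanity check at $n=2$, $k=M_A$: the correction must equal $3M_A$, which matches the $1/\delta$-weighted sum but not the $\delta$-weighted one.)
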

\begin{rmk}
It may be possible to interpret the coefficients $\g_{W_A}(n)$ in terms of hook lengths from formulas of Han \cite{H} or others, as in the symmetric case.  The author does not make this connection here.
\end{rmk}

\vspace{.3cm}
It is also natural to study the \emph{exponential rate of conjugacy growth}\footnote{Cotron, Dicks, and Fleming \cite{REU} modify Bacher's and de la Harpe's definition \cite{BdlH} by changing the denominator from $n$ to $\sqrt{n}$.  With denominator $n$, most of the growth series that we study have exponential rate of conjugacy growth zero.} of a group $G$ generated by a set $S$, namely
\begin{equation*}
\widetilde{H}_{G,S}^{\text{conj}}=\limsup_{n\rightarrow\infty}\frac{\log\g_{G,S}(n)}{\sqrt{n}}.
\end{equation*}
It is useful to notice that $\text{exp}\left(\widetilde{H}_{G,S}^{\text{conj}}\right)$ is the radius of convergence of the conjugacy growth series $C_{G,S}(q)$.  For permutational wreath products, we apply a theorem of Cotron, Dicks and Fleming \cite{REU} on the asymptotic behavior of the generalized partition function (see equations (\ref{gpf}) and (\ref{asymp})).  Let $W_S=H_S\wr_X\text{Sym}(X)$ be a wreath product where $H_S$ is a finite group, $M_S$ is the number of conjugacy classes of $H_S$, and $X$ is an infinite set.  It is easy to see from equation (\ref{cgswr}) that the conjugacy growth series of such a wreath product is the generating function of the generalized partition function $p(n)_\textbf{e}$ for the vector $\textbf{e}=(M_S)$.  This implies the following corollary.
\begin{Cor}\label{asymptotic}
Let $W_S=H_S\wr_X\text{Sym}(X)$ be a wreath product where $H_S$ is a finite group, $M_S$ is the number of conjugacy classes of $H_S$, and $X$ is an infinite set.  If $S^{\left(W_S\right)}$ is a sufficiently large generating set of $W_S$, then we have
\begin{equation*}\label{symasymptotic}
\g_{W_S}(n)\sim\left(\frac{M_S^{\frac{1+M_S}{4}}}{2^{\frac{5+3M_S}{4}}3^{\frac{1+M_S}{4}}n^{\frac{3+M_S}{4}}}\right)e^{\pi\sqrt{\frac{2nM_S}{3}}}.
\end{equation*}
\end{Cor}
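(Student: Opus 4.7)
The plan is to realize that identity (\ref{cgswr}) already expresses $C_{W_S,S^{(W_S)}}(q)$ in exactly the form of the generating function of a generalized partition function $p(n)_{\mathbf{e}}$, so the corollary is essentially an application of the Cotron--Dicks--Fleming asymptotic theorem referenced in the paragraph just before the corollary. Specifically, I would start by rewriting
\begin{equation*}
C_{W_S,S^{(W_S)}}(q)=\prod_{n=1}^\infty(1-q^n)^{-M_S}=\sum_{n=0}^\infty p(n)_{\mathbf{e}}\,q^n
\end{equation*}
with the one-dimensional vector $\mathbf{e}=(M_S)$, so that $\g_{W_S}(n)=p(n)_{\mathbf{e}}$. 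This is the key identification: the coefficients we care about are precisely the number of $M_S$-colored partitions of $n$, which is the simplest case of the generalized partition functions of \cite{REU}.

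Having made this identification, the next step is to quote equation (\ref{asymp}) from the paper of Cotron, Dicks and Fleming, which provides a closed-form asymptotic for $p(n)_{\mathbf{e}}$ in terms of the entries of $\mathbf{e}$. Substituting $\mathbf{e}=(M_S)$ into their formula produces an expression of the shape
\begin{equation*}
\g_{W_S}(n)\sim C(M_S)\cdot n^{-(3+M_S)/4}\cdot e^{\pi\sqrt{2M_S n/3}},
\end{equation*}
and then one simplifies $C(M_S)$ to the explicit constant $M_S^{(1+M_S)/4}/\bigl(2^{(5+3M_S)/4}\,3^{(1+M_S)/4}\bigr)$ that appears in the statement. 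The exponent $\pi\sqrt{2nM_S/3}$ is exactly what one expects from the classical Hardy--Ramanujan asymptotic (recovered when $M_S=1$, giving $p(n)\sim e^{\pi\sqrt{2n/3}}/(4n\sqrt{3})$), rescaled by the number of colors.

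The main obstacle, such as it is, is purely bookkeeping: one must translate the notation of \cite{REU} into the one-dimensional case, and carefully track the constants coming out of the saddle point (or Meinardus-type) argument hidden inside equation (\ref{asymp}). In particular, the exponents $(5+3M_S)/4$ on $2$, $(1+M_S)/4$ on both $M_S$ and $3$, and $(3+M_S)/4$ on $n$ all arise from the same evaluation of a single Gaussian integral in the saddle-point approximation, so it suffices to verify them in the scalar case $\mathbf{e}=(M_S)$ of their formula. Once the constants are matched, no further analytic work is required, and the corollary follows immediately.
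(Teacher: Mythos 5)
Your proposal is correct and follows essentially the same route as the paper, which also identifies $C_{W_S,S^{(W_S)}}(q)=\prod_{n\geq1}(1-q^n)^{-M_S}$ with the generating function of $p(n)_{\mathbf{e}}$ for $\mathbf{e}=(M_S)$ and then quotes the Cotron--Dicks--Fleming asymptotic (here $d=1$, $\gamma=\delta=M_S$, $\lambda=(2\pi)^{-M_S/2}$, $A=\pi^2 M_S/6$). Your substitution does yield exactly the stated constant $M_S^{(1+M_S)/4}/\bigl(2^{(5+3M_S)/4}3^{(1+M_S)/4}\bigr)$ and exponent $\pi\sqrt{2nM_S/3}$, so the argument is complete.
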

We now give the exponential rate of conjugacy growth for wreath products in the symmetric case using this asymptotic formula.
\begin{Cor}
The exponential rate of conjugacy growth for the group $W_S=H_S\wr_X\text{Sym}(X)$ defined above is
\begin{eqnarray*}
\widetilde{H}^{\mathrm{conj}}_{W_S}=\pi\sqrt{\frac{2M_S}{3}}.
\end{eqnarray*}
\end{Cor}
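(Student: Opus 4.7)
The plan is to derive this corollary as an immediate consequence of the asymptotic formula from Corollary \ref{asymptotic}. Recall that $\widetilde{H}^{\mathrm{conj}}_{W_S} = \limsup_{n\to\infty} \log \g_{W_S}(n) / \sqrt{n}$, so the task reduces to identifying the $\sqrt{n}$-scale growth of $\log \g_{W_S}(n)$.

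The key step is to take logarithms in the asymptotic
\[
\g_{W_S}(n) \sim C(M_S)\cdot n^{-(3+M_S)/4}\cdot \exp\!\left(\pi\sqrt{\tfrac{2nM_S}{3}}\right),
\]
where $C(M_S)$ denotes the explicit constant prefactor appearing in Corollary \ref{asymptotic}. This yields
\[
\log \g_{W_S}(n) = \pi\sqrt{\tfrac{2nM_S}{3}} - \tfrac{3+M_S}{4}\log n + \log C(M_S) + o(1),
\]
and dividing through by $\sqrt{n}$ shows that every term other than $\pi\sqrt{2M_S/3}$ vanishes in the limit. Because the relation in Corollary \ref{asymptotic} is a genuine equivalence rather than a one-sided bound, the $\limsup$ is actually an ordinary limit, and the claimed value drops out.

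There is no substantive obstacle here: the corollary follows from Corollary \ref{asymptotic} by comparison of dominant terms on the logarithmic scale, and the only verification needed is the elementary fact that $(\log n)/\sqrt{n} \to 0$. Any finite group $H_S$ has $M_S \geq 1$ conjugacy classes, so $C(M_S)$ is a positive finite constant and contributes nothing after normalization by $\sqrt{n}$.
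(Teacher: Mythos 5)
Your proposal is correct and matches the paper's approach: the paper likewise obtains this corollary directly from the asymptotic formula of Corollary \ref{asymptotic} (itself a consequence of the Cotron--Dicks--Fleming theorem), with the exponential term $e^{\pi\sqrt{2nM_S/3}}$ dominating after taking logarithms and dividing by $\sqrt{n}$. Your added remarks that the polynomial and constant factors vanish at this scale and that the $\limsup$ is in fact a limit are exactly the (routine) verification the paper leaves implicit.
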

We can also apply the theorem to wreath products in the alternating case using equation (\ref{cgswralt}).
\begin{Cor}\label{altasymptotic}
Let $W_A=H_A\wr_X\text{Alt}(X)$ be a wreath product where $H_A$ is a finite group, $M_A$ is the number of conjugacy classes of $H_A$, and $X$ is an infinite set.  If $S^{\left(W_A\right)}$ is a sufficiently large generating set of $W_A$, then we have
\begin{equation*}
\g_{W_A}(n)\sim\left(\frac{M_A^{\frac{1+2M_A}{4}}}{2^{1+2M_A}3^{\frac{1+2M_A}{4}}n^{\frac{3+2M_A}{4}}}\right)e^{2\pi\sqrt{\frac{nM_A}{3}}}.
\end{equation*}
\end{Cor}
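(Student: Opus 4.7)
The plan is to binomially expand the generating function from (\ref{cgswralt}), identify the single term whose coefficient asymptotics dominate, and then invoke Corollary \ref{asymptotic}. Writing $A(q) := \prod_{n\geq 1}(1-q^n)^{-2}$ and $B(q) := \prod_{n\geq 1}(1-q^{2n})^{-1}$, I will expand
\begin{equation*}
C_{W_A,S^{(W_A)}}(q) \;=\; \frac{1}{2^{M_A}}\bigl(A(q)+B(q)\bigr)^{M_A} \;=\; \frac{1}{2^{M_A}}\sum_{k=0}^{M_A}\binom{M_A}{k}G_k(q),
\end{equation*}
where $G_k(q) := \prod_{n\geq 1}(1-q^n)^{-2k}(1-q^{2n})^{-(M_A-k)}$. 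Each $G_k$ is the generating function of a generalized partition function of the form handled by the Cotron--Dicks--Fleming theorem (equations (\ref{gpf}) and (\ref{asymp})), since its exponent sequence is periodic modulo $2$.

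I will then locate the dominant term. The Dirichlet series attached to $G_k$ is $\zeta(s)\bigl(2k+(M_A-k)\cdot 2^{-s}\bigr)$, which has a simple pole at $s=1$ with residue $(3k+M_A)/2$. A standard Meinardus-type computation (equivalently, a direct application of (\ref{asymp})) gives the exponential growth rate $\pi\sqrt{(3k+M_A)n/3}$ for $[q^n]G_k(q)$, and this is strictly increasing in $k$. The maximum occurs at $k=M_A$, yielding rate $2\pi\sqrt{M_An/3}$, and the gap to the next largest, $\pi\sqrt{(4M_A-3)n/3}$, is of order $\sqrt{n}$, so every subdominant term in the binomial sum is exponentially absorbed.

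At $k=M_A$, the factor $(1-q^{2n})^{-(M_A-k)}$ trivializes and $G_{M_A}(q)=\prod_{n\geq 1}(1-q^n)^{-2M_A}$ coincides with the symmetric series of Theorem \ref{thm} with parameter $M_S=2M_A$. Corollary \ref{asymptotic} therefore yields
\begin{equation*}
[q^n]G_{M_A}(q) \;\sim\; \frac{(2M_A)^{(1+2M_A)/4}}{2^{(5+6M_A)/4}\,3^{(1+2M_A)/4}\,n^{(3+2M_A)/4}}\,e^{2\pi\sqrt{nM_A/3}}.
\end{equation*}
Multiplying by the prefactor $\binom{M_A}{M_A}/2^{M_A}=1/2^{M_A}$ and collapsing the powers of $2$ via $(1+2M_A)/4 - M_A - (5+6M_A)/4 = -1-2M_A$ recovers the formula in the corollary.

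The main obstacle is making the exponential suppression of the subdominant terms rigorous. The cleanest route is to apply Cotron--Dicks--Fleming to each of the finitely many $G_k$ separately, extract each asymptotic with its explicit constant, and observe that for $k<M_A$ the ratio to the $k=M_A$ term is $\exp(-c_k\sqrt{n})$ with $c_k>0$; since there are only $M_A+1$ summands, summing these errors preserves the stated asymptotic. Everything else reduces to algebraic bookkeeping.
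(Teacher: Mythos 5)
Your proposal is correct and follows essentially the same route as the paper's own remark: binomially expand $\bigl(A(q)+B(q)\bigr)^{M_A}$, apply the Cotron--Dicks--Fleming asymptotic (equation (\ref{asymp})) to each summand $\prod_{n\geq1}(1-q^n)^{-2k}(1-q^{2n})^{-(M_A-k)}$, and keep only the dominant term, which is your $k=M_A$ (the paper's $k=0$, an immaterial reversal of indexing) with growth $e^{2\pi\sqrt{nM_A/3}}$. Your constant bookkeeping matches the stated formula, and your explicit justification of the exponential suppression of the subdominant terms is in fact slightly more careful than the paper's ``intuitively'' argument.
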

We also give the exponential rate of conjugacy growth in the alternating case using the above asymptotic formula.
\begin{Cor}
The exponential rate of conjugacy growth for the group $W_A=H_A\wr_X\text{Alt}(X)$ defined above is
\begin{eqnarray*}
\widetilde{H}^{\mathrm{conj}}_{W_A}=2\pi\sqrt{\frac{M_A}{3}}.
\end{eqnarray*}
\end{Cor}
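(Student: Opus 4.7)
The plan is to derive this corollary as a direct and essentially formal consequence of Corollary \ref{altasymptotic}. First I would take the logarithm of the asymptotic formula
\[
\g_{W_A}(n)\sim\left(\frac{M_A^{(1+2M_A)/4}}{2^{1+2M_A}\,3^{(1+2M_A)/4}\,n^{(3+2M_A)/4}}\right)e^{2\pi\sqrt{nM_A/3}},
\]
which yields
\[
\log\g_{W_A}(n)=2\pi\sqrt{\tfrac{nM_A}{3}}-\tfrac{3+2M_A}{4}\log n+C_{M_A}+o(1),
\]
where $C_{M_A}$ is the constant collecting the logarithms of $M_A$, $2$, and $3$ that appear in the prefactor, and the $o(1)$ term reflects the fact that the ratio between $\g_{W_A}(n)$ and its asymptotic tends to $1$.

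Next I would divide through by $\sqrt{n}$. The dominant contribution $2\pi\sqrt{nM_A/3}/\sqrt{n}=2\pi\sqrt{M_A/3}$ is independent of $n$, while the remaining terms $\log n/\sqrt{n}$, $C_{M_A}/\sqrt{n}$, and $o(1)/\sqrt{n}$ all tend to $0$ as $n\to\infty$. Consequently the honest limit exists, so the $\limsup$ in the definition of $\widetilde{H}^{\mathrm{conj}}_{W_A}$ agrees with it, giving
\[
\widetilde{H}^{\mathrm{conj}}_{W_A}=\lim_{n\to\infty}\frac{\log\g_{W_A}(n)}{\sqrt{n}}=2\pi\sqrt{\frac{M_A}{3}},
\]
as claimed.

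There is no genuine obstacle beyond the asymptotic already established in Corollary \ref{altasymptotic}: the polynomial prefactor and the multiplicative constants are washed out under the normalization by $\sqrt{n}$, and only the leading square-root term in the exponent survives. The sole point to verify carefully is that the $\limsup$ in the definition coincides with the true limit, which is immediate once the $\sim$ relation is converted into an additive $o(1)$ estimate on the logarithmic scale. Indeed the very same mechanism recovers the symmetric-case exponent $\pi\sqrt{2M_S/3}$ from Corollary \ref{asymptotic}, confirming that the argument is robust and essentially model-free.
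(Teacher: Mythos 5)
Your argument is correct and matches the paper's route: the paper likewise deduces this corollary directly from the asymptotic formula of Corollary \ref{altasymptotic}, with the polynomial prefactor washed out after dividing $\log\g_{W_A}(n)$ by $\sqrt{n}$. Your extra remark that the $\limsup$ is actually a limit is a harmless refinement of what the paper leaves implicit.
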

\vspace{.2cm}
We are interested in finding relationships between wreath products of $\text{Sym}(X)$ and wreath products of $\text{Alt}(X)$.  Let $W_S=H_S\wr_X\text{Sym}(X)$ and $W_S'=H_S'\wr_X\text{Sym}(X)$ be two wreath products of $\text{Sym}(X)$, where $H_S,H_S'$ are finite groups and $M_S,M_S'$ are the number of conjugacy classes of $H_S,H_S'$ respectively.  Let $W_A=H_A\wr_X\text{Alt}(X)$ and $W_A'=H_A'\wr_X\text{Alt}(X)$ be two wreath products of $\text{Alt}(X)$, where $H_A, H_A'$ are finite groups and $M_A, M_A'$ are the number of conjugacy classes of $H_A,H_A'$ respectively.
\begin{Qn}\label{ratioqn}
What is the asymptotic behavior of the following ratios?
\begin{equation*}
(1)\hspace{.3cm}\frac{\g_{W_S}(n)}{\g_{W_S'}(n)}\hspace{1.7cm}(2)\hspace{.3cm}\frac{\g_{W_S}(n)}{\g_{W_A}(n)}\hspace{1.7cm}(3)\hspace{.3cm}\frac{\g_{W_A}(n)}{\g_{W_S}(n)}\hspace{1.7cm}(4)\hspace{.3cm}\frac{\g_{W_A}(n)}{\g_{W_A'}(n)}
\end{equation*}
In particular, when do the ratios approach some nonzero finite number?
\end{Qn}
The asymptotic behavior of the ratios follows from Corollaries \ref{symasymptotic} and \ref{altasymptotic}.
\begin{Cor}\label{ratio}
Let $W_S,W_S',W_A,$ and $W_A'$ be groups as above.  Then as $n\rightarrow\infty$, we have
\begin{eqnarray*}
(1)\hspace{.3cm}\frac{\g_{W_S}(n)}{\g_{W_S'}(n)}&\sim&\left(\frac{M_S^{\frac{1+M_S}{4}}}{M_S'^{\frac{1+M_S'}{4}}}\right)\left[2^{\frac{3}{4}\left(M_S'-M_S\right)}(3n)^{\frac{M_S'-M_S}{4}}\right]e^{\pi\sqrt{\frac{2n}{3}}\left(\sqrt{M_S}-\sqrt{M_S'}\right)}.\\\\
(2)\hspace{.3cm}\frac{\g_{W_S}(n)}{\g_{W_A}(n)}&\sim&\left(\frac{M_S^{\frac{1+M_S}{4}}}{M_A^{\frac{1+2M_A}{4}}}\right)\left[2^{\frac{8M_A-3M_S-1}{4}}(3n)^{\frac{2M_A-M_S}{4}}\right]e^{\pi\sqrt{\frac{2n}{3}}\left(\sqrt{M_S}-\sqrt{2M_A}\right)}.\\\\
(3)\hspace{.3cm}\frac{\g_{W_A}(n)}{\g_{W_S}(n)}&\sim&\left(\frac{M_A^{\frac{1+2M_A}{4}}}{M_S^{\frac{1+M_S}{4}}}\right)\left[2^{\frac{1+3M_S-8M_A}{4}}(3n)^{\frac{M_S-2M_A}{4}=}\right]e^{\pi\sqrt{\frac{2n}{3}}\left(\sqrt{2M_A}-\sqrt{M_S}\right)}.\\\\
(4)\hspace{.3cm}\frac{\g_{W_A}(n)}{\g_{W_A'}(n)}&\sim&\left(\frac{M_A^{\frac{1+2M_A}{4}}}{M_A'^{\frac{1+2M_A'}{4}}}\right)\left[4^{\left(M_A'-M_A\right)}(3n)^{\frac{M_A'-M_A}{2}}\right]e^{2\pi\sqrt{\frac{n}{3}}\left(\sqrt{M_A}-\sqrt{M_A'}\right)}.
\end{eqnarray*}
\end{Cor}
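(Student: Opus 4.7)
The strategy is direct: each of the four ratios is obtained by dividing the asymptotic formulas from Corollaries \ref{asymptotic} and \ref{altasymptotic}, so the proof reduces to careful bookkeeping of the prefactors, the powers of $n$, and the exponentials. I would state this up front and then handle the four cases in parallel.

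First I would recall the two asymptotic formulas. For wreath products over $\mathrm{Sym}(X)$,
\begin{equation*}
\g_{W_S}(n)\sim\frac{M_S^{(1+M_S)/4}}{2^{(5+3M_S)/4}\,3^{(1+M_S)/4}\,n^{(3+M_S)/4}}\,e^{\pi\sqrt{2nM_S/3}},
\end{equation*}
and for wreath products over $\mathrm{Alt}(X)$,
\begin{equation*}
\g_{W_A}(n)\sim\frac{M_A^{(1+2M_A)/4}}{2^{1+2M_A}\,3^{(1+2M_A)/4}\,n^{(3+2M_A)/4}}\,e^{2\pi\sqrt{nM_A/3}}.
\end{equation*}
Both are consequences of the theorem of Cotron--Dicks--Fleming applied to the generalized partition functions arising from identities (\ref{cgswr}) and (\ref{cgswralt}).

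Next, for each of the four ratios I would take the quotient of the appropriate pair of these expressions and separately track (i) the $M$-dependent leading constant, (ii) the combined power of $2$, (iii) the combined power of $3n$, and (iv) the exponential factor. The constant factor is just $M_S^{(1+M_S)/4}/M_S'^{(1+M_S')/4}$ (and similarly for the other cases). For the powers of $2$ and $3$ in case (1), subtracting the exponents $(5+3M_S)/4$ and $(1+M_S)/4$ from their primed analogues yields $3(M_S'-M_S)/4$ and $(M_S'-M_S)/4$, which combine with the analogous exponent of $n$ into the quoted $(3n)^{(M_S'-M_S)/4}$ factor (after pulling out $2^{3(M_S'-M_S)/4}$). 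The exponential difference $e^{\pi\sqrt{2n/3}(\sqrt{M_S}-\sqrt{M_S'})}$ is immediate. Case (4) is identical in structure with $M_A$ replacing $M_S$ throughout, except that the $2^{1+2M_A}$ in the denominator produces the cleaner $4^{M_A'-M_A}$ factor and the exponent of $3n$ doubles, giving $(3n)^{(M_A'-M_A)/2}$.

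Cases (2) and (3) are mutual reciprocals (up to the Stolz--Ces\`aro level of asymptotic equivalence), so it suffices to carry out one and invert. For case (2), the powers of $2$ contribute $2^{(1+2M_A)-(5+3M_S)/4}=2^{(8M_A-3M_S-1)/4}$, the powers of $3$ and $n$ combine to $(3n)^{(2M_A-M_S)/4}$, and the exponential becomes $e^{\pi\sqrt{2n/3}(\sqrt{M_S}-\sqrt{2M_A})}$, using $2\pi\sqrt{nM_A/3}=\pi\sqrt{2n/3}\cdot\sqrt{2M_A}$. Case (3) is obtained by flipping signs in all three places. There is no real obstacle beyond keeping the many fractional exponents straight; the only place where an error would have consequences is the matching of $2\pi\sqrt{nM_A/3}$ with $\pi\sqrt{2n/3}\sqrt{2M_A}$, which is the identity that aligns the alternating and symmetric exponential rates and answers Question \ref{ratioqn}: the ratios in (2) and (3) tend to a nonzero finite limit precisely when $M_S=2M_A$, justifying the dimension-doubling phenomenon mentioned in the abstract.
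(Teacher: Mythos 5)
Your proposal is correct and takes essentially the same route as the paper, which simply derives all four ratios by dividing the asymptotic formulas of Corollaries \ref{asymptotic} and \ref{altasymptotic} (themselves consequences of the Cotron--Dicks--Fleming theorem); your bookkeeping of the constants, the powers of $2$ and of $3n$, and the identity $2\pi\sqrt{nM_A/3}=\pi\sqrt{2n/3}\,\sqrt{2M_A}$ checks out in all four cases.
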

We now observe for which pairs $(M_S,M_S'),(M_S,M_A),(M_A,M_S),$ and $(M_A,M_A')$ these ratios asymptotically approach zero, infinity, or some nonzero finite number.  Corollary \ref{L} follows from the asymptotic behavior of the exponential functions in the above proposition.
\begin{Cor}\label{L}
Let $W_S,W_S',W_A,$ and $W_A'$ be groups as above.  Then as $n\rightarrow\infty$, we have the following asymptotic behavior.
\begin{enumerate}
\item If $M_S<M_S'$, then $\frac{\g_{W_S}(n)}{\g_{W_S'}(n)}\sim0$.  If $M_S>M_S'$, then $\frac{\g_{W_S}(n)}{\g_{W_S'}(n)}\sim\infty$.\\
If $M_S=M_S'$, then $\frac{\g_{W_S}(n)}{\g_{W_S'}(n)}\sim1$.\\\\
\item If $M_S<2M_A$, then $\frac{\g_{W_S}(n)}{\g_{W_A}(n)}\sim0$.  If $M_S>2M_A$, then $\frac{\g_{W_S}(n)}{\g_{W_A}(n)}\sim\infty$.\\
If $M_S=2M_A$, then $\frac{\g_{W_S}(n)}{\g_{W_A}(n)}\sim2^{M_A}$.\\\\
\item If $2M_A<M_S$, then $\frac{\g_{W_A}(n)}{\g_{W_S}(n)}\sim0$.  If $2M_A>M_S$, then $\frac{\g_{W_A}(n)}{\g_{W_S}(n)}\sim\infty$.\\
If $2M_A=M_S$, then $\frac{\g_{W_A}(n)}{\g_{W_S}(n)}\sim\frac{1}{2^{M_A}}$.\\\\
\item If $M_A<M_A'$, then $\frac{\g_{W_A}(n)}{\g_{W_A'}(n)}\sim0$.  If $M_A>M_A'$, then $\frac{\g_{W_A}(n)}{\g_{W_A'}(n)}\sim\infty$.\\
If $M_A=M_A'$, then $\frac{\g_{W_A}(n)}{\g_{W_A'}(n)}\sim1$.
\end{enumerate}
Moreover, the converses of all of the above statements hold as well.
\end{Cor}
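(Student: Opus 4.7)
The plan is to deduce Corollary \ref{L} directly from the asymptotic formulas provided by Corollary \ref{ratio}. In each of the four cases, the ratio has the shape
\[
(\text{constant})\cdot n^{\alpha}\cdot e^{c\sqrt{n}},
\]
where $\alpha$ and $c$ are explicit expressions in the relevant $M$-values. Since $e^{c\sqrt{n}}$ dominates every power of $n$ whenever $c\neq 0$, the sign of $c$ controls the asymptotics in all non-degenerate cases. The key observation is that in each ratio the sign of $c$ coincides with the sign of the corresponding comparison between $M$-values, because the square-root function is strictly increasing on the nonnegative reals.

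First I would handle the generic cases. For (1), $c=\pi\sqrt{2/3}\,(\sqrt{M_S}-\sqrt{M_S'})$ is negative if $M_S<M_S'$ and positive if $M_S>M_S'$; the exponential thus drives the ratio to $0$ or $\infty$ respectively, dwarfing the polynomial prefactor. The same reasoning applies to (2), (3), and (4), after substituting $\sqrt{2M_A}$ for the appropriate square root in (2) and (3), and noting that in (4) the exponent is $2\pi\sqrt{n/3}\,(\sqrt{M_A}-\sqrt{M_A'})$, which again has the sign of $M_A-M_A'$.

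Next I would analyze the equality cases $c=0$. For (1), when $M_S=M_S'$ both the polynomial exponent $(M_S'-M_S)/4$ and the constant prefactor $M_S^{(1+M_S)/4}/M_S'^{(1+M_S')/4}$ reduce to $1$, producing limit $1$; case (4) with $M_A=M_A'$ is identical. For (2) with $M_S=2M_A$, the exponent $(2M_A-M_S)/4$ vanishes, and the constant factor collapses to
\[
\frac{(2M_A)^{(1+2M_A)/4}}{M_A^{(1+2M_A)/4}}\cdot 2^{(8M_A-6M_A-1)/4}=2^{(1+2M_A)/4}\cdot 2^{(2M_A-1)/4}=2^{M_A},
\]
and the limit for (3) is its reciprocal $1/2^{M_A}$. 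This arithmetic bookkeeping in (2) and (3) is the only step that requires care; the remaining equality cases are immediate.

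Finally, the converse assertions require no additional work. The forward direction establishes a trichotomy: each ratio tends to exactly one of $0$, $\infty$, or the specific nonzero finite value listed above, and the three alternatives correspond to mutually exclusive conditions on the $M$-values. Hence any hypothesis of the form ``the ratio is $\sim 0$'', ``the ratio is $\sim\infty$'', or ``the ratio tends to a nonzero finite limit'' uniquely back-solves to the corresponding inequality or equality, completing the proof.
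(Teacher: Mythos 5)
Your proposal is correct and follows the same route as the paper, which deduces Corollary \ref{L} from the ratio asymptotics of Corollary \ref{ratio} by noting that the sign of the exponential term $e^{c\sqrt{n}}$ dictates the limit and that the constants collapse to $1$, $2^{M_A}$, or $2^{-M_A}$ in the equality cases. Your explicit verification of the constant $2^{M_A}$ when $M_S=2M_A$ and your trichotomy argument for the converses are exactly the bookkeeping the paper leaves implicit.
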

Given any two wreath products of $\text{Sym}(X)$ or $\text{Alt}(X)$, the above theorem guarantees the asymptotic behavior of the ratios between the coefficients of their conjugacy growth series.  In other words, for any two wreath products $W$ and $W'$, we know the expected relationship between the number of conjugacy classes of $H$ in $W$ and the number of conjugacy classes of $H'$ in $W'$ with minimal word length $n$ for any $n$.
\begin{rmk}
Although we know the asymptotic behavior of the above ratios, this does not mean that the ratios of the coefficients are always exactly equal to the above values.
\end{rmk}
For example, consider the wreath products $W_S=H_S\wr_X\text{Sym}(X)$ and $W_A=H_A\wr_X\text{Alt(X)}$, where $H_S,H_A$ are finite groups with $M_S=10,M_A=5$ conjugacy classes respectively.  We expect the ratio of the coefficients of $W_S$ to the coefficients of $W_A$ to be asymptotic to $2^5=32$.  We compute the following coefficients with Maple.\\
\begin{center}
\begin{tabular}{ |c|c|c|c| } 
 \hline
$n$ & $\g_{W_S}(n)$ & $\g_{W_A}(n)$ & $\frac{\g_{W_S}(n)}{\g_{W_A}(n)}$ \\ 
 \hline
 \hline
 1 & 10 & 5 & 2\\
 \hline
 10 & 1605340 & 176963 & 9.071613840\\
 \hline
100 & $0.2333013623\times10^{28}$ & $0.7541087996\times10^{26}$ & 30.93736108\\ 
 \hline
200 & $0.1067904403\times10^{42}$ & $0.3346942881\times10^{40}$ & 31.90686071\\ 
 \hline
300 & $0.4721905614\times10^{52}$ & $0.1476229954\times10^{51}$ & 31.98624714\\
 \hline
 400 & $0.5248644122\times10^{61}$ & $0.1640339890\times10^{60}$ & 31.99729613\\
 \hline
500 & $0.5369981415\times10^{69}$ & $0.1678152777\times10^{68}$ & 31.99935959\\
 \hline
\end{tabular}
\end{center}

\vspace{1cm}

\section*{Acknowledgements}
The author would like to thank Ken Ono for his invaluable advice and guidance on this project.

\vspace{1cm}

\section{Proofs}\label{2}

We give the proofs of equation (\ref{cgswralt}) and Theorems \ref{thm} and \ref{thm2} here.  We also explain what it means for a generating set to be sufficiently large and give remarks on Corollaries \ref{NO} and \ref{altasymptotic}.\\

A set $S$ of transpositions of a set $X$ is called \emph{partition-complete} (\emph{PC}) \cite{BdlH} if\\
\begin{enumerate}[(i)]
\item the transposition graph $\Gamma(S)$ is connected, and
\item for every partition $\l=(\l_1,\dots,\l_k)\vdash L$, $\Gamma(S)$ contains a forest of $k$ trees with $\l_1+1,\dots,\l_k+1$ vertices respectively.\\
\end{enumerate}
For the corresponding property of \emph{partition-complete for wreath products} (\emph{PCwr}) \cite{BdlH}, we must first establish more notation.  Let $X$ be an infinite set, $H$ a group, and $W=H\wr_X\text{Sym}(X)$.  The group $W$ acts naturally on the set $H\times X$; namely, for $(\varphi,f)\in W$, the action is defined by
\begin{equation*}
(h,x)\mapsto\big(\varphi(f(x))h,f(x)\big).
\end{equation*}
For $a\in H\setminus\{1\}$ and $u\in X$, we let $\varphi_u^a\in W$ denote the permutation that maps $(h,x)\in H\times X$ to $(ah,u)$ if $x=u$, and to $(h,x)$ otherwise.  Then $(\varphi_u^a)_{a\in H\setminus\{1\},\,u\in X}$ generates the group $H^{(X)}$.  Now, let $H_u:=\{\varphi_u^a\mid a\in H\setminus\{1\}\}$, and define the subsets
\begin{eqnarray*}
T_H&:=&\bigcup_{u\in X}H_u\subseteq H^{(X)},\\
T_X&:=&\{(x\,\,\,y)\in\text{Sym}(X):x,y\in X\text{ are distinct}\}\subseteq\text{Sym}(X).
\end{eqnarray*}
Let $S_H\subset T_H$ and $S_X\subset T_X$ be subsets, and let $S=S_H\sqcup S_X\subseteq W$.  Such a set $S$ is said to be \emph{PCwr} if\\
\begin{enumerate}[(i)]
\item the transposition graph $\Gamma(S_X)$ is connected, and
\item for all $L\geq0$ and partitions $\l=(\l_1,\dots,\l_k)\vdash L$, $\Gamma(S_X)$ contains a forest of $k$ trees $T_1,\dots,T_k$, with $T_i$ having $\l_i$ vertices, including one vertex $x^{(i)}$ such that $\varphi_{x^{(i)}}^a\in S_H$ for all $a\in H\setminus\{1\}$.\\
\end{enumerate}
\begin{rmk}
The conditions \emph{PC} and \emph{PCwr} essentially require the generating set $S$ to contain ``enough" transpositions to represent all possible partitions in its transposition graph.
\end{rmk}

\vspace{.3cm}

\begin{proof}[Proof of equation (\ref{cgswralt})]
This proof follows from the proofs of equations (\ref{cgsalt}) and (\ref{cgswr}) in \cite{BdlH}.  For each $w=(\phi, \sigma)\in W_A = H_A \wr_{X} \text{Alt}(X)$, we can split $\sigma$ into a product of an even number of cycles of even length, denoted $\sigma_{e}$, and a product of cycles of odd length, denoted $\sigma_{o}$, so that $w = (\phi, \sigma_{e} \sigma_{o})$.  Let ${(H_A)}_{*}$ denote the set of conjugacy classes of $H_A$; we write $1\in{(H_A)}_{*}$ for the class $\{1\}\in H_A$.  To each conjugacy class in $W_A$ we associate an ${(H_A)}_{*}$-indexed family of partitions.  Using the same notation as in \cite{BdlH}, we associate the conjugacy classes in $H_A$ to the family of partitions $$\left(\lambda^{(1)}, \nu^{(1)}; \left(\mu^{(\eta)}, \gamma^{(\eta)}\right)_{\eta \in {(H_A)}_{*} \setminus 1} \right),$$
where $\nu^{(1)}$ and $\gamma^{(\eta)}$ each have an even number of positive parts, in the following way.

Let $X^{(w)}$ be the finite subset of $X$ that is the union of the supports of $\phi$ and $\sigma$.  Let $\sigma$ be the product of the disjoint cycles $c_{1}, ..., c_{k}$,  where $c_{i} = \left(x_{1}^{(i)}, x_{2}^{(i)}, ..., x_{v_{i}}^{(i)}\right)$ with $x_{j}^{(i)} \in X^{(w)}$ and $v_{i} = \text{ length}(c_{i})$.  We include cycles of length $1$ for each $x \in X$ such that $x \in\text{sup}(\phi)$ and $x \notin\text{sup}(\sigma)$, so that $$X^{(w)} = \bigsqcup_{1\leq i \leq k}\text{sup}(c_{i}).$$  Define $\eta_{*}^{w}(c_{i}) \in {(H_A)}_{*}$ to be the conjugacy class of the product $\phi\left(x_{v_{i}}^{(i)}\right) \phi\left(x_{v{i}-1}^{(i)}\right) \cdots \phi\left(x_{1}^{(i)}\right) \in H_A$.  For $\eta \in {(H_A)}_{*}$ and $\ell \geq 1$, let $m_{\ell}^{w,\eta}$ denote the number of cycles $c$ in $\{c_{1}, ..., c_{k} \}$ such that $\text{length}(c)=\ell$ and $\eta_{*}^{w}(c) = \eta$.  Let $\mu^{w, \eta} \vdash n^{w, \eta}$ be the partition with $m_{\ell}^{w, \eta}$ parts equal to $\ell$, for all $\ell \geq 1$.  Note that $$\sum_{\eta \in {(H_A)}_{*}} n^{w, \eta} = \sum_{\eta \in {(H_A)}_{*},\,\,\ell \geq 1} \ell m_{\ell}^{w, \eta} = \Big|X^{(w)}\Big|.$$  Also observe that the partition $\mu^{w, 1}$ does not contain parts of size $1$, because if $v_{i} = 1$, then $\eta_{*}^{w}(c_{i}) \neq 1$.  Using the same notation as above, let $\lambda^{w, 1}$ be the partition with $m_{\ell}^{w, 1}$ parts equal to $\ell - 1$.  We can write $\sigma = \sigma_{e} \sigma_{o}$ as above, so $\l^{w,1}$ splits into two partitions, one of which has an even number of parts.  Define the \emph{type} of $w$ to be the family $\left(\lambda^{(1)}, \nu^{(1)}; \left(\mu^{(\eta)}, \gamma^{(\eta)}\right)_{\eta \in {(H_A)}_{*} \setminus 1} \right)$.  Then two elements in $W_A$ are conjugate if and only if they have the same type.  Thus, each ${(H_A)}_{*}$-indexed family of partitions $\left(\lambda^{(1)}, \nu^{(1)}; \left(\mu^{(\eta)}, \gamma^{(\eta)}\right)_{\eta \in {(H_A)}_{*} \setminus 1} \right)$ is the type of one conjugacy class in $W_A$.

Consider an ${(H_A)}_{*}$-indexed family of partitions $\left(\lambda^{(1)}, \nu^{(1)}; \left(\mu^{(\eta)}, \gamma^{(\eta)}\right)_{\eta \in {(H_A)}_{*} \setminus 1} \right)$ and the corresponding conjugacy class in $W_A$.  Let $u^{(1)}, v^{(1)}, u^{(\eta)}, v^{(\eta)}$ be the sums of the parts of $\lambda^{(1)}, \nu^{(1)}, \mu^{(\eta)}, \gamma^{(\eta)}$ respectively, and let $k^{(1)}, t^{(1)}, k^{(\eta)}, t^{(\eta)}$ be the number of parts of  $\lambda^{(1)}, \nu^{(1)}, \mu^{(\eta)}, \gamma^{(\eta)}$ respectively. 

Choose a representative $w = (\phi, \sigma)$ of this conjugacy class such that
$$\sigma = \prod_{i=1}^k c_{i} = \prod_{i=1}^k \left(x_{1}^{(i)}, x_{2}^{(i)}, ..., x_{\mu_{i}}^{(i)}\right)$$ and 
\begin{eqnarray*}
\phi\left(x_{j}^{(i)}\right) &=& 1 \in H_A \text{ for all } j \in \{1, ..., \mu_{i} \}  \quad \text{ when }  \eta_{*}^{w}(c_{i}) = 1,\\
\phi\left(x_{j}^{(i)}\right) &=& \begin{cases}
1 \text{ for all } j \in \{1, ..., \mu_{i} - 1\} \\
h \neq 1 \text{ for } j = \mu_{i} 
\end{cases}  \text{ when } \eta_{*}^{w}(c_{i}) \neq 1.
\end{eqnarray*}

Observe that 
\begin{eqnarray*}
k &=& k^{(1)} + t^{(1)} + \sum_{\eta \in {(H_A)}_{*} \setminus 1,\,\eta\neq1} \left(k^{(\eta)} + t^{(\eta)} \right),\\
\Big|X^{(w)}\Big| &=& u^{(1)} + k^{(1)} + v^{(1)} + t^{(1)} + \sum_{\eta \in {(H_A)}_{*} \setminus 1,\,\eta\neq1} \left(u^{(\eta)} + v^{(\eta)} \right).
\end{eqnarray*}

Hence, the contribution to $C_{W_A,S^{(W_A)}}(q)$ from $\left(\lambda^{(1)}, \nu^{(1)}; \left(\mu^{(\eta)}, \gamma^{(\eta)}\right)_{\eta \in {(H_A)}_{*} \setminus 1,\,\eta\neq1} \right)$ is $$\left(q^{u^{(1)}}q^{v^{(1)}} \prod_{\eta \in {(H_A)}_{*} \setminus 1,\,\eta\neq1} q^{u^{(\eta)}}q^{v^{(\eta)}} \right).$$

It follows that 
\begin{eqnarray*}
C_{W_A,S^{\left(W_A\right)}}(q) &=& \left[ \left(\prod_{u_{1}=1}^\infty \frac{1}{1-q^{u_{1}}} \right) \left(\frac{1}{2} \prod_{v_{1}=1}^\infty \frac{1}{1-q^{v_{1}}} + \frac{1}{2} \prod_{v_{1}=1}^\infty \frac{1}{1+q^{v_{1}}} \right) \right]\\
&&\times \prod_{\eta \in {(H_A)}_{*} \setminus 1,\,\eta\neq1} \left[ \left(\prod_{u_{\eta}=1}^\infty \frac{1}{1-q^{u_{\eta}}} \right) \left(\frac{1}{2} \prod_{v_{\eta}=1}^\infty \frac{1}{1-q^{v_{\eta}}} + \frac{1}{2} \prod_{v_{\eta}=1}^\infty \frac{1}{1+q^{v_{\eta}}} \right) \right]\\\\\\\\
&=& \left[ \left( \frac{1}{2} \prod_{n_{1}=1}^\infty \frac{1}{1-q^{2 n_{1}}} + \frac{1}{2} \prod_{n_{1}=1}^\infty \frac{1}{(1-q^{n_{1}})^{2}} \right) \right]\\
&&\times  \prod_{\eta \in {(H_A)}_{*} \setminus 1,\,\eta\neq1} \left[ \left( \frac{1}{2} \prod_{n_{\eta}=1}^\infty \frac{1}{1-q^{2 n_{\eta}}} + \frac{1}{2} \prod_{n_{\eta}=1}^\infty \frac{1}{(1-q^{n_{\eta}})^{2}} \right) \right]\\
&=& \left( \frac{1}{2} \prod_{k=1}^\infty \frac{1}{1-q^{2 k}} + \frac{1}{2} \prod_{k=1}^\infty \frac{1}{(1-q^{k})^{2}} \right)^{\left|{(H_A)}_{*}\right|}.
\end{eqnarray*}
The equality between the first and second line is given in the appendix of \cite{BdlH}.
\end{proof}

\vspace{.3cm}

The \emph{generalized partition function} $p(n)_\textbf{e}$ is defined for the vector $\textbf{e}=(e_1,\dots,e_k)\in\Z^k$ by its generating function
\begin{equation}\tag{2.1}
\sum\limits_{n=0}^\infty p(n)_\textbf{e}q^n=\prod_{n=1}^\infty\frac{1}{(1-q^n)^{e_1}\cdots(1-q^{kn})^{e_k}}.\label{gpf}
\end{equation}
The following theorem gives an asymptotic formula for the generalized partition function, which was obtained by using properties of modular forms\footnote{For background on modular forms, see \cite{WoM}.}.
\begin{thm}[Cotron-Dicks-Fleming \cite{REU}]\label{CDF}
Let $\textbf{e}=(e_1,\dots,e_k)$ be any nonzero vector with nonnegative integer entries, and let $d:=\gcd\{m:e_m\neq0\}$.  Define the quantities
\begin{equation*}
\g:=\g(\textbf{e})=\sum_{m=1}^ke_{dm}\hspace{.5cm}\text{and}\hspace{.5cm}\d:=\d(\textbf{e})=\sum_{m=1}^k\frac{e_{dm}}{m}.
\end{equation*}
Then as $n\rightarrow\infty$, we have that
\begin{equation}\tag{2.2}
p(dn)_{\textbf{e}}\sim\frac{\l A^{\frac{1+\g}{4}}}{2\sqrt{\pi}n^{\frac{3+\g}{4}}}e^{2\sqrt{An}},\label{asymp}
\end{equation}
where
\begin{equation*}
\l:=\prod_{m=1}^k\left(\frac{m}{2\pi}\right)^{\frac{e_{dm}}{2}}\hspace{.5cm}\text{and}\hspace{.5cm}A:=\frac{\pi^2\d}{6}.
\end{equation*}
\end{thm}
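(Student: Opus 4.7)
The plan is to recognize the generating function as an eta-quotient and then extract the coefficient asymptotics via the Hardy--Ramanujan circle method. First, since $e_m=0$ whenever $d\nmid m$, the product in (\ref{gpf}) involves only powers of $q^d$, so $\sum p(n)_{\textbf{e}}\,q^n=G(q^d)$ where
\begin{equation*}
G(Q):=\prod_{n\geq 1}\prod_{m=1}^{\lfloor k/d\rfloor}(1-Q^{mn})^{-e_{dm}}.
\end{equation*}
Hence $p(dn)_{\textbf{e}}=[Q^n]G(Q)$, and we may work with $G$, whose supported indices have gcd equal to $1$. Setting $Q=e^{2\pi i\tau}$ and using Dedekind's eta, one has $G(Q)=Q^{-c}\prod_{m}\eta(m\tau)^{-e_{dm}}$ with $c=\sum_m me_{dm}/24$; up to this harmless $Q$-power, $G$ is an eta-quotient.

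Next, the modular transformation $\eta(-1/z)=\sqrt{-iz}\,\eta(z)$ determines the behavior at the cusp $Q\to 1^-$. Substituting $\tau=iy$ and using $\eta(i/(my))=e^{-\pi/(12my)}(1+o(1))$ as $y\to 0^+$, one obtains $\eta(m\tau)^{-e_{dm}}\sim (my)^{e_{dm}/2}e^{\pi e_{dm}/(12my)}$; taking the product over $m$ yields
\begin{equation*}
G(e^{-2\pi y})\sim \Bigl(\prod_m m^{e_{dm}/2}\Bigr)\,y^{\g/2}\,e^{\pi\d/(12y)}\qquad(y\to 0^+).
\end{equation*}

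To extract the coefficient, apply Cauchy's formula on the circle $|Q|=e^{-2\pi y_0}$:
\begin{equation*}
p(dn)_{\textbf{e}}=\int_{-1/2}^{1/2}G\bigl(e^{-2\pi y_0+2\pi it}\bigr)\,e^{2\pi ny_0-2\pi int}\,dt,
\end{equation*}
with $y_0:=\sqrt{\d/(24n)}$ chosen to be the saddle point of $\phi(y):=\pi\d/(12y)+2\pi ny$. On the \emph{major arc} $|t|\leq y_0^{3/2}$, substitute the cusp expansion and expand $\phi$ to second order at $y_0$, using $\phi(y_0)=2\sqrt{An}$ and $\phi''(y_0)=\pi\d/(6y_0^3)$. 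The resulting Gaussian integral contributes $\sqrt{2\pi/\phi''(y_0)}=\d^{1/4}/(2\cdot 6^{1/4}n^{3/4})$, and collecting the constants (rewriting $\prod m^{e_{dm}/2}$ as $\l(2\pi)^{\g/2}$) yields exactly $\frac{\l A^{(1+\g)/4}}{2\sqrt{\pi}\,n^{(3+\g)/4}}\,e^{2\sqrt{An}}$.

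The principal obstacle is controlling the \emph{minor arc} contribution: for $t$ bounded away from $0$ in $[-1/2,1/2]$, one needs $|G(e^{-2\pi y_0+2\pi it})|$ to be exponentially smaller than $G(e^{-2\pi y_0})$. This is handled by invoking the modular transformation at each rational cusp $a/c$ (viewing $Q^c G(Q)$ as a weakly holomorphic modular form on a suitable congruence subgroup) and checking that the exponent gained from the modular image is strictly less than $\pi\d/(12y_0)$. This step is precisely where the reduction to gcd $=1$ matters, since otherwise there would be competing cusps yielding the same exponential growth. Once this standard Meinardus-type bound is established, the saddle-point computation above produces the claimed asymptotic.
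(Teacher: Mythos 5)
This theorem is not proved in the paper at all: it is imported from Cotron--Dicks--Fleming \cite{REU}, with only the remark that their proof uses properties of modular forms, so there is no internal proof to compare your argument against. What you have written is a sketch of the standard argument for such statements (and, in spirit, of the one in \cite{REU}): realize the generating function as an eta-quotient, use $\eta(-1/z)=\sqrt{-iz}\,\eta(z)$ to obtain the growth as $Q\to 1$, and run a saddle-point/circle-method analysis. Your reduction to the case $d=1$ is correct, and your major-arc bookkeeping checks out: with $y_0=\sqrt{\delta/(24n)}$ one indeed has $\phi(y_0)=2\sqrt{An}$ and $\phi''(y_0)=\pi\delta/(6y_0^3)$, and combining $\sqrt{2\pi/\phi''(y_0)}=\delta^{1/4}/(2\cdot 6^{1/4}n^{3/4})$ with the prefactor $\bigl(\prod_m m^{e_{dm}/2}\bigr)y_0^{\gamma/2}=\lambda(2\pi y_0)^{\gamma/2}=\lambda(A/n)^{\gamma/4}$ reproduces exactly $\lambda A^{(1+\gamma)/4}(2\sqrt{\pi})^{-1}n^{-(3+\gamma)/4}e^{2\sqrt{An}}$.

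Two points need repair before this is a proof rather than a sketch. First, your major arc $|t|\le y_0^{3/2}$ is calibrated at precisely the Gaussian scale $1/\sqrt{\phi''(y_0)}\asymp y_0^{3/2}$, so as stated it captures only a fixed fraction of the Gaussian mass; you need a window that is simultaneously much larger than $y_0^{3/2}$ and $o(y_0^{4/3})$ (for instance $|t|\le y_0^{3/2}\log n$), so that the Gaussian tails are negligible while the cubic error $\phi'''(y_0)t^3\asymp \delta t^3/y_0^4$ still tends to zero. Second, the minor-arc estimate is asserted rather than established, and it is where the real work lies: one must show that near a primitive $c$-th root of unity with $c>1$ the exponential growth of $G$ is governed by $\frac{\pi}{12}\sum_m e_{dm}\gcd(m,c)^2/(mc^2)$, which is strictly smaller than $\pi\delta/12$ precisely because the supported indices of $G$ have gcd $1$; this Meinardus-type bound is the step your sketch postpones. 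Minor nits: the $Q$-power should be $Q^{+c}$ with $c=\sum_m me_{dm}/24$, not $Q^{-c}$, and $Q^cG(Q)$ is an eta-quotient of negative weight with multiplier rather than literally a weakly holomorphic modular form; neither affects the structure of the argument.
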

Corollaries \ref{asymptotic}, \ref{altasymptotic}, \ref{ratio}, and \ref{L} all follow from the above theorem.

\vspace{.3cm}

\begin{proof}[A Remark on Corollary \ref{altasymptotic}]
By the binomial theorem applied to the conjugacy growth series in equation (\ref{cgswralt}), we find that 
\begin{equation*}
\g_{W_A}(n)\sim\frac{1}{2^{M_A}}\sum_{k=0}^{M_A}\left[\frac{(4M_A-3k)^{\frac{1+2M_A-k}{4}}}{2^{\frac{4M_A-3k+3}{2}}3^{\frac{1+2M_A-k}{4}}n^{\frac{3+2M_A-k}{4}}}\cdot e^{2\pi\sqrt{\left(\frac{4M_A-3k}{12}\right)n}}\right].
\end{equation*}
But, intuitively, the summands corresponding to $k>0$ grow much more slowly than the summand corresponding to $k=0$, since the instance of $k$ in the exponential function is negative.  Therefore, the above sum is asymptotic to the $k=0$ term, so we have
\begin{equation*}
\g_{W_A}(n)\sim\frac{M_A^{\frac{1+2M_A}{4}}}{2^{1+2M_A}3^{\frac{1+2M_A}{4}}n^{\frac{3+2M_A}{4}}}\cdot e^{2\pi\sqrt{\frac{nM_A}{3}}}.
\end{equation*}
\end{proof}

\vspace{.3cm}

We now introduce the proof of Theorems \ref{thm} and \ref{thm2}.  In a paper by Bruinier, Kohnen, and Ono \cite{BKO}, the universal polynomial $F_n$ is defined as\\
\begin{align*}
F_n\left(x_1,\dots,x_{n-1}\right)&\hspace{.2cm}:=\hspace{.2cm}-\frac{2x_1\sigma_1(n-1)}{n-1}\hspace{.3cm}\\
+&\sum_{\substack{m_1,\dots,m_{n-2}\geq0 \\
m_1+\cdots+(n-2)m_{n-2}=n-1} }(-1)^{m_1+\cdots+m_{n-2}}\cdot\frac{(m_1+\cdots+m_{n-2}-1)!}{m_1!\cdots m_{n-2}!}\cdot x_2^{m_1}\cdots x_{n-1}^{m_{n-2}},
\end{align*}
and it is used to define a recursion relation for coefficients of meromorphic modular forms on $SL_2(\Z)$.  Frechette and the author \cite{FL} modify this polynomial to the above $\widehat{F}_n$ and use it to define a recursion relation for coefficients of quotients of Rogers-Ramanujan-type $q$-series.  Their proof surprisingly only requires properties of logarithmic derivatives applied to a $q$-series infinite product identity.  The proof below is adapted from the proof in \cite{FL} and can be applied to any $q$-series infinite product identity, including the famous identity of Nekrasov and Okounkov \cite{NO}.

\begin{proof}[Proof of Theorem \ref{thm}]
Define the $q$-series identity
\begin{equation*}
F_r(q):=\sum\limits_{n=0}^\infty p_n(r)q^n:=\prod_{n=1}^\infty(1-q^n)^r
\end{equation*}
so that $p_n(r)=\g_{W_S}(n)$ and $r=-M_S$.  We take logarithms of both sides to obtain
\begin{eqnarray*}
\log\left(1+\sum\limits_{n=1}^\infty p_n(r)q^n\right)&=&\sum\limits_{n=1}^\infty r\log(1-q^n)\\
&=&-\sum\limits_{n=1}^\infty\sum\limits_{k=1}^\infty\frac{rq^{kn}}{k},
\end{eqnarray*}
by the Taylor expansion for $\log(1-x)$.  Then we take the derivatives of both sides to obtain
\begin{eqnarray*}
\frac{\sum\limits_{n=1}^\infty np_n(r)q^{n-1}}{1+\sum\limits_{n=1}^\infty p_n(r)q^n}&=&-\sum\limits_{n=1}^\infty\sum\limits_{d\mid n} rdq^{n-1}\\
&=&-\sum\limits_{n=1}^\infty r\sigma_1(n)q^{n-1},
\end{eqnarray*}
so we have
\begin{equation*}
\sum\limits_{n=1}^\infty np_n(r)q^n=\left(-\sum\limits_{n=1}^\infty r\sigma_1(n)q^n\right)\left(1+\sum\limits_{n=1}^\infty p_n(r)q^n\right).
\end{equation*}
For convenience, define $b(n):=r\sigma_1(n)$.  Expanding the right hand side and equating coefficients, we now have
\begin{equation*}
0=b(n)+b(n-1)p_1(r)+b(n-2)p_2(r)+\cdots+b(1)p_{n-1}(r)+np_n(r).
\end{equation*}
The symmetric power functions
\begin{equation*}
s_i:=X_1^i+\cdots+X_n^i
\end{equation*}
and the elementary symmetric functions
\begin{equation*}
\sigma_i=\sum\limits_{1\leq j_1\leq\cdots\leq j_i\leq n}X_{j_1}\cdots X_{j_i}
\end{equation*}
exhibit a similar relationship; namely, we have the identity
\begin{equation}\label{eq}
0=s_n-s_{n-1}\sigma_1+s_{n-2}\sigma_2-\cdots+(-1)^{n-1}s_1\sigma_{n-1}+(-1)^n\sigma_n.\tag{2.1}
\end{equation}
Evaluating equation (\ref{eq}) at $(X_1,\dots,X_n)=(\l(1,n),\dots,\l(n,n))$, where $\l(j,n)$ are the roots of the polynomial
\begin{equation*}
X^n+p_1(r)X^{n-1}+\cdots+p_{n-1}(r)X+p_n(r),
\end{equation*}
we have that $p_n(r)=\sigma_n$ for $n\geq1$.  Then we have $b(n)=(-1)^ns_n$.  Using the fact that
\begin{equation*}
s_n=n\sum_{\substack{m_1,\dots,m_n\geq0 \\
m_1+\cdots+nm_n=i} }(-1)^{m_2+m_4+\cdots}\cdot\frac{(m_1+\cdots+m_n-1)!}{m_1!\cdots m_n!}\cdot\sigma_1^{m_1}\cdots\sigma_n^{m_n},
\end{equation*}
we arrive at the recursion
\begin{equation*}
p_n(r)=\widehat{F}_n\big(p_1(r),\dots,p_{n-1}(r)\big)-\frac{r}{n}\sigma_1(n).
\end{equation*}
Thus, we have
\begin{equation*}
\g_{W_S}(n)=\widehat{F}_n\big(\g_{W_S}(1),\dots,\g_{W_S}(n-1)\big)+\frac{M_S}{n}\sigma_1(n).
\end{equation*}
\end{proof}
\vspace{.5cm}
Theorem \ref{thm} gives a recurrence formula for the coefficients $\g_{W_S}(n)$ of the conjugacy growth series of a permutational wreath product in which the group $H_S$ has $M_S$ conjugacy classes.  Now, we consider the more general infinite product $\prod_{n\geq 1}\left(1-q^n\right)^r$ for any complex number $r$, and we ignore its implications for finite groups.  Then the above proof also applies to the coefficients of the Nekrasov-Okounkov hook length formula \cite{NO}
\begin{equation*}
\sum\limits_{\l\in\mathcal{P}}x^{|\l|}\prod\limits_{h\in\mathcal{H}(\l)}\left(1-\frac{z}{h^2}\right)=\prod\limits_{k\geq1}\left(1-x^k\right)^{z-1}
\end{equation*}
if we change variables $z\mapsto1+r$ and $x\mapsto q:=e^{2\pi i\t}$ for $\t\in\mathcal{H}$.  The coefficients
\begin{equation*}
\prod\limits_{h\in\mathcal{H}(\l)}\left(1-\frac{z}{h^2}\right)=\prod\limits_{h\in\mathcal{H}(\l)}\left(1-\frac{1+r}{h^2}\right)
\end{equation*}
of the infinite product
\begin{equation*}
\prod\limits_{k\geq1}\left(1-x^k\right)^{z-1}=\prod\limits_{n\geq1}\left(1-q^n\right)^r
\end{equation*}
therefore satisfy the recurrence relation
\begin{equation*}
\prod\limits_{h\in\mathcal{H}(\l)}\left(1-\frac{1+r}{h^2}\right)=\g_{W_S}(n)=\widehat{F}_n\big(\g_{W_S}(1),\dots,\g_{W_S}(n-1)\big)-\frac{r}{n}\sigma_1(n).
\end{equation*}
Although for $r\in\C\setminus\Z^+$ we can no longer observe the relationship between the number of conjugacy classes of $H_S$ and the coefficients of the conjugacy growth series of $H_S\wr_X\text{Sym}(X)$, we do obtain a simple recursion for the Nekrasov-Okounkov hook length formula which is independent of complex analysis and hook lengths.

\begin{proof}[Proof of Theorem \ref{thm2}]
This proof closely follows the proof of Theorem \ref{thm}.  Define the $q$-series identity
\begin{equation*}
F_{M_A}(q):=\sum\limits_{n=0}^\infty P_n(M_A)q^n:=\left(\frac{1}{2}\prod_{n=1}^\infty\frac{1}{\left(1-q^n\right)^2}+\frac{1}{2}\prod_{n=1}^\infty\frac{1}{1-q^{2n}}\right)^{M_A}
\end{equation*}
so that $P_n(M_A)=\g_{W_A}(n)$.  Then, by the binomial theorem, we have
\begin{equation*}
\sum\limits_{n=0}^\infty P_n(M_A)q^n=\frac{1}{2^{M_A}}\sum_{k=1}^{M_A}\binom{M_A}{k}\prod_{n=1}^\infty\frac{1}{\left(1-q^n\right)^{2k}\left(1-q^{2n}\right)^{M_A-k}}.
\end{equation*}
It suffices to find recurrence relations for each summand.  Define
\begin{equation*}
F_{M_A,k}(q):=\sum_{n=0}^\infty a_k(n)q^n:=\prod_{n=1}^\infty\frac{1}{\left(1-q^n\right)^{2k}\left(1-q^{2n}\right)^{M_A-k}}.
\end{equation*}
We take the logarithmic derivative of both sides as in the proof of Theorem \ref{thm}.  First, we take logarithms of both sides to obtain
\begin{eqnarray*}
\log\left(1+\sum\limits_{n=1}^\infty a_k(n)q^n\right)&=&-2k\sum_{n=1}^\infty\log\left(1-q^n\right)+(k-M_A)\sum_{n=1}^\infty\log\left(1-q^{2n}\right)\\
&=&-(k+M_A)\sum_{n=1}^\infty\log\left(1-q^n\right)+(k-M_A)\sum_{n=1}^\infty\log\left(1+q^n\right)\\
&=&(k+M_A)\sum_{n=1}^\infty\sum_{m=1}^\infty\frac{q^{mn}}{m}+(M_A-k)\sum_{n=1}^\infty\sum_{m=1}^\infty\frac{(-1)^mq^{mn}}{m},
\end{eqnarray*}
by the Taylor expansions for $\log(1-x)$ and $\log(1+x)$.  Then we take the derivatives of both sides to obtain
\begin{eqnarray*}
\frac{\sum\limits_{n=1}^\infty na_k(n)q^{n-1}}{1+\sum\limits_{n=1}^\infty a_k(n)q^n}&=&-\sum_{n=1}^\infty\sum_{d\mid n}d\cdot\left[(-1)^{\frac{n}{d}}(k-M_A)-(k+M_A)\right]q^{n-1},
\end{eqnarray*}
so we have
\begin{equation*}
\sum\limits_{n=1}^\infty na_k(n)q^n=\left(-\sum_{n=1}^\infty\sum_{d\mid n}d\cdot\left[(-1)^{\frac{n}{d}}(k-M_A)-(k+M_A)\right]q^n\right)\left(1+\sum\limits_{n=1}^\infty a_k(n)q^n\right).
\end{equation*}
For convenience, define $b_k(n):=\sum_{d\mid n}d\cdot\left[(-1)^{\frac{n}{d}}(k-M_A)-(k+M_A)\right]$.  Expanding the right hand side and equating coefficients, we now have
\begin{equation*}
0=b_k(n)+b_k(n-1)a_k(1)+b_k(n-2)a_k(2)+\cdots+b_k(1)a_k(n-1)+na_k(n).
\end{equation*}
Using the same identity between the symmetric power functions and the elementary symmetric functions as in the proof of Theorem \ref{thm}, we arrive at the recursion
\begin{eqnarray*}
a_k(n)&=&\widehat{F}_n\big(a_k(1),\dots,a_k(n-1)\big)-\frac{1}{n}\sum_{d\mid n}d\cdot\left[(-1)^{\frac{n}{d}}(k-M_A)-(k+M_A)\right]\\
&=&\widehat{F}_n\big(a_k(1),\dots,a_k(n-1)\big)-\sum_{\d\mid n}\d\cdot\left[(-1)^\d(k-M_A)-(k+M_A)\right].
\end{eqnarray*}
Thus, we have
\begin{equation*}
\g_{W_A}(n)=\frac{1}{2^{M_A}}\sum_{k=0}^{M_A}\binom{M_A}{k}\left(\widehat{F}_n\big(a_k(1),\dots,a_k(n-1)\big)-\sum_{\d\mid n}\d\cdot\left[(-1)^\d(k-M_A)-(k+M_A)\right]\right).
\end{equation*}
\end{proof}

\begin{rmk}
This recurrence relation gives the coefficients $\g_{W_A}(n)$ in terms of the coefficients $a_k(1)$, $\dots$, $a_k(n-1)$ of each summand.  Since the linear combination of infinite products is raised to the $(M_A)$th power in the conjugacy growth series, presumably there is no simple way to obtain a recurrence relation for $\g_{W_A}(n)$ in terms of $\g_{W_A}(1),\dots,\g_{W_A}(n-1)$ as in the symmetric case.
\end{rmk}

\end{document}